\documentclass[10pt]{amsart}

\usepackage{amssymb,latexsym}
\usepackage{amsfonts}
\usepackage{amsthm}
\usepackage{enumerate}

\textwidth	480pt
\textheight	640pt
\oddsidemargin	-0pt
\evensidemargin -0pt
\topmargin	-10pt

\newcommand{\w}{\omega}
\newcommand{\U}{\mathcal U}
\newcommand{\C}{\mathcal C}
\newcommand{\V}{\mathcal V}
\newcommand{\F}{\mathcal F}
\newcommand{\Fr}{{\mathfrak F}r}
\newcommand{\II}{\mathbb I}

\newcommand{\rest}{\upharpoonright}
\newcommand{\inchi}{\mathrm{nw}_\chi}
\newcommand{\hinchi}{\mathrm{hnw}_\chi}
\newcommand{\nw}{\mathrm{nw}}
\newcommand{\Ra}{\Rightarrow}

\newtheorem{theorem}{Theorem}
\newtheorem{corollary}{Corollary}
\newtheorem{proposition}{Proposition}
\newtheorem{lemma}{Lemma}
\newtheorem{question}{Question}
\theoremstyle{definition}
\newtheorem{remark}{Remark}
\newtheorem{example}{Example}

\title[On function spaces and convergence in topological spaces]{On meager function spaces, network character and meager convergence in topological spaces}
\author[T.~Banakh, V.~Mykhaylyuk, L.~Zdomskyy]{Taras Banakh, Volodymyr Mykhaylyuk, Lyubomyr Zdomskyy}
\address[T.Banakh]{Ivan Franko National University of Lviv, Universytetska 1, Lviv 79000, Ukraine and\newline Uniwersytet Humanistyczno-Przyrodniczy Jana Kochanowskiego, Kielce, Poland.}
\email{tbanakh@yahoo.com}
\urladdr{http://www.franko.lviv.ua/faculty/mechmat/Departments/Topology/bancv.html}
\address[V.Mykhaylyuk]{Department of Mathematics,
Yuriy Fedkovych Chernivtsi National University,
 Kotsjubynskogo str. 2, Chernivtsi 58012, Ukraine.}
\email{vmykhaylyuk@ukr.net}

\address[L.Zdomskyy]{Kurt G\"odel Research Center for Mathematical Logic,
University of Vienna, W\"ahringer Stra\ss e 25, A-1090 Wien,
Austria.}
\email{lzdomsky@logic.univie.ac.at}
\urladdr{http://www.logic.univie.ac.at/\~{}lzdomsky/}
\thanks{The third  author   acknowledges the support of FWF grant
P19898-N18.}

\subjclass[2000]{Primary: 54A20, 54C35; Secondary: 54E52.}
\keywords{network character, meager convergent sequence, meager filter, meager space, function space}

\begin{document}
\begin{abstract} For a non-isolated point $x$ of a topological space $X$ let $\inchi(x)$ be the smallest cardinality of a family $\mathcal N$ of infinite subsets of $X$ such that each neighborhood $O(x)\subset X$ of $x$ contains a set $N\in\mathcal N$. We prove that
\begin{itemize}
\item each infinite compact Hausdorff space $X$ contains a non-isolated point $x$ with  $\inchi(x)=\aleph_0$;
\item for each point $x\in X$ with $\inchi(x)=\aleph_0$ there is an injective sequence $(x_n)_{n\in\w}$ in $X$ that $\F$-converges to $x$ for some meager filter $\F$ on $\w$;
\item if a functionally Hausdorff space $X$ contains an $\F$-convergent injective sequence for some meager filter $\F$, then for every path-connected space $Y$ that contains two non-empty open sets with disjoint closures, the function space $C_p(X,Y)$ is meager.
\end{itemize}
Also we investigate properties of filters $\F$ admitting an injective $\F$-convergent sequence in $\beta\w$.
\end{abstract}
\maketitle

This paper was motivated by a question of the second author who asked  if the function space $C_p(\w^*,2)$ is meager. Here $\w^*=\beta\w\setminus\w$ is the remainder of the Stone-\v Cech compactification of the discrete space of finite ordinals $\w$ and $2=\{0,1\}$ is the doubleton endowed with the discrete topology.
According to Theorem 4.1 of \cite{Myh} this question is tightly connected with the so-called meager convergence of sequences in $\w^*$.

A filter $\F$ on $\w$ is {\em meager} if it is meager (i.e., of the first Baire category) in the power-set $\mathcal P(\w)=2^\w$ endowed with the usual compact metrizable topology. By the Talagrand characterization \cite{Tal}, a free filter $\F$ on $\w$ is meager if and only if $\xi(\F)=\Fr$ for some finite-to-one function $\xi:\w\to \w$.
A function $\xi:\w\to\w$ is {\em finite-to-one} if for each point $y\in \w$ the preimage $\xi^{-1}(y)$ is finite and non-empty. A filter $\F$ on $\w$ is defined to be {\em $\xi$-meager} for a surjective function $\xi:\w\to\w$ if $\xi(\F)=\Fr$.

We shall say that for a filter $\F$ on $\w$, a sequence $(x_n)_{n\in\w}$ of points of a topological space $X$ {\em $\F$-converges} to a point $x_\infty\in X$ if for each neighborhood $O(x_\infty)\subseteq X$ of $x_\infty$ the set $\{n\in\w:x_n\in O(x_\infty)\}$ belongs to the filter $\F$. Observe that the usual convergence of sequences coincides with the $\Fr$-convergence for the Fr\'echet filter $\Fr=\{A\subseteq\w:\w\setminus A$ is finite$\}$ that consists of all cofinite subsets of $\w$. The filter convergence of sequences has been actively studied both in Analysis \cite{FA1}, \cite{FA2} and Topology \cite{FT1}. A sequence $(x_n)_{n\in\w}$ will be called {\em meager-convergent} if it is $\F$-convergent for some meager filter $\F$ on $\w$. A sequence $(x_n)_{n\in\w}$ is called {\em injective} if $x_n\ne x_m$ for all $n\ne m$.

We shall prove that for a zero-dimensional Hausdorff space $X$ the function space $C_p(X,2)$ is meager if $X$ contains an injective meager-convergent sequence.   We recall that a topological space $X$ is {\em functionally Hausdorff} if for any distinct
points $x,y\in X$ there is a continuous function $\lambda:X\to\II$ such that $\lambda(x)\ne\lambda(y)$. Here $\II=[0,1]$ is the unit interval. For topological spaces $X,Y$ by $C_p(X,Y)$ we denote the space of continuous functions endowed with the topology of pointwise convergence.

\begin{theorem}\label{t2} Let $X$ be a functionally Hausdorff space and $Y$ be a topological space that contains two open non-empty subsets with disjoint closures. Assume that $X$ is zero-dimensional or $Y$ is path-connected. If $X$ contains an injective meager-convergent sequence, then the function space $C_p(X,Y)$ is meager.
\end{theorem}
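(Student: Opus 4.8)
The plan is to present $C_p(X,Y)$ as a countable union of closed nowhere dense subsets. First I would unpack the hypothesis: let $(x_n)_{n\in\w}$ be an injective sequence in $X$ that $\F$-converges to a point $x_\infty\in X$ for some meager filter $\F$. By the Talagrand characterization there is a finite-to-one function $\xi:\w\to\w$ with $\xi(\F)=\Fr$; put $I_k=\xi^{-1}(k)$ and $F_k=\{x_n:n\in I_k\}$, so that $\{F_k\}_{k\in\w}$ is a family of pairwise disjoint finite subsets of $X$. Fix also non-empty open sets $V_0,V_1\subseteq Y$ with $\overline{V_0}\cap\overline{V_1}=\emptyset$ and points $y_0\in V_0$, $y_1\in V_1$. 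For $j\in\{0,1\}$ and $m\in\w$ consider
\[ D^j_m=\{f\in C_p(X,Y):\ f(F_k)\not\subseteq V_j\ \text{for all}\ k\ge m\}. \]

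The first step is the reduction $C_p(X,Y)=\bigcup_{m\in\w}(D^0_m\cup D^1_m)$. Given a continuous $f:X\to Y$, the point $f(x_\infty)$ lies outside $\overline{V_j}$ for at least one $j$ (the two closures being disjoint), so $f^{-1}(Y\setminus\overline{V_j})$ is a neighbourhood of $x_\infty$; by $\F$-convergence the set $\{n:f(x_n)\notin\overline{V_j}\}$ belongs to $\F$, hence its $\xi$-image is cofinite in $\w$, which means that for all but finitely many $k$ some $n\in I_k$ has $f(x_n)\notin\overline{V_j}$, hence $f(x_n)\notin V_j$, hence $f(F_k)\not\subseteq V_j$; thus $f\in D^j_m$ for a suitable $m$. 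It is also immediate that each $D^j_m$ is closed in $C_p(X,Y)$, being the intersection over $k\ge m$ of the sets $\bigcup_{n\in I_k}\{f:f(x_n)\in Y\setminus V_j\}$, each of which is closed as a finite union of preimages of the closed set $Y\setminus V_j$ under the evaluation maps.

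The heart of the argument is to show that each $D^j_m$ is nowhere dense. Let $U=\{f\in C_p(X,Y):f(y_i)\in O_i,\ i\le r\}$ be a non-empty basic open set, with witness $f_0\in U$ and with $y_1,\dots,y_r$ distinct. Since the $F_k$ are pairwise disjoint, I may choose $k\ge m$ with $F_k\cap\{y_1,\dots,y_r\}=\emptyset$, and then I must produce $g\in U$ with $g(F_k)\subseteq V_j$. If $X$ is zero-dimensional, pick a clopen set $W\subseteq X$ with $F_k\subseteq W$ and $W\cap\{y_1,\dots,y_r\}=\emptyset$ (possible since these are disjoint finite sets in a zero-dimensional Hausdorff space) and let $g$ be the constant $y_j$ on $W$ and $f_0$ on $X\setminus W$; this $g$ is continuous, lies in $U$, and $g(F_k)=\{y_j\}\subseteq V_j$. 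If instead $Y$ is path-connected, concatenate finitely many paths to obtain $\gamma:\II\to Y$ with $\gamma(0)=y_j$ and $\gamma(t_i)=f_0(y_i)$ for some $0<t_1<\dots<t_r\le 1$; since $X$ is functionally Hausdorff one can interpolate a continuous $\phi:X\to\II$ with $\phi\rest F_k\equiv 0$ and $\phi(y_i)=t_i$ for all $i$ (for finite sets of pairwise distinct points this follows from functional Hausdorffness by the standard product-of-separating-functions construction), and then $g:=\gamma\circ\phi$ works: it is continuous, $g(y_i)=\gamma(t_i)=f_0(y_i)\in O_i$, and $g(F_k)=\{\gamma(0)\}=\{y_j\}\subseteq V_j$. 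In either case $g\in U\setminus D^j_m$, so $D^j_m$ is nowhere dense, and therefore $C_p(X,Y)=\bigcup_{m}(D^0_m\cup D^1_m)$ is meager.

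I expect the main obstacle to be the nowhere-density step in the path-connected case: one must manufacture, from $f_0$ alone, a continuous map into $Y$ that is prescribed on the finite set $\{y_1,\dots,y_r\}$ yet takes values inside $V_j$ on all of $F_k$, and since $X$ is only assumed functionally Hausdorff (not normal) this seems to force the two-stage device of first realizing the finite data along a single path in $Y$ and then pulling it back by an interpolating real-valued function on $X$; verifying that interpolation lemma for functionally Hausdorff spaces is the least routine ingredient. The reduction step, although it is the place where ``meager convergence'' is genuinely used, should be routine once the Talagrand characterization is invoked.
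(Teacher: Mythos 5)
Your proof is correct and follows essentially the same route as the paper: a countable decomposition of $C_p(X,Y)$ according to the behaviour of $f$ on the blocks $\xi^{-1}(k)$, with the covering step driven by Talagrand's characterization and the nowhere-density step by perturbing a given function on one block, using either a clopen set or a path in $Y$ pulled back through a real-valued interpolating function obtained from functional Hausdorffness. Your variant is marginally cleaner in two spots --- the sets $D^j_m$ are visibly closed, so it suffices to show they have empty interior, and the clopen-set modification replaces the paper's retraction onto a finite set --- but the underlying ideas are identical.
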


\begin{proof} Let $(x_n)_{n\in\w}$ be a sequence in $X$ that $\F$-converges to $x_\infty\in X$ for some meager filter $\F$ in $\w$. Then there is a finite-to-one surjection $\xi:\w\to\w$ such that $\xi(\F)=\Fr$. By our assumption, $Y$ contains two non-empty open subsets $W_0,W_1$ with disjoint closures.

For every $n\in\w$ consider the subset
$$\C_{n}=\big\{f\in C_p(X,Y):\forall i\in\{0,1\}\;\big(f(x_\infty)\notin \overline{W}_i\Rightarrow
\forall m\ge n\;\exists k\in\xi^{-1}(m)\;\;(f(x_k)\notin \overline{W}_i)\big)\big\}.$$

The meager property of $C_p(X,Y)$ will follow as soon as we check that $C_p(X,Y)=\bigcup_{n\in\w}\C_n$ and each set $\C_n$ is nowhere dense in $C_p(X,Y)$.

To show that $C_p(X,Y)=\bigcup_{n\in\w}\C_n$, fix any continuous function $f\in C_p(X,Y)$. Since $Y=(Y\setminus\overline{W}_0)\cup (Y\setminus\overline{W}_1)$, there is $i\in\{0,1\}$ such that $f(x_\infty)\notin\overline{W}_i$. Since $(x_n)$ is $\F$-convergent to $x_\infty$ and $f^{-1}(Y\setminus \overline{W}_i)$ is an open neighborhood of $x_\infty$, the set $F=\{n\in\w:f(x_n)\notin\overline{W}_i\}$ belongs to the filter $\F$ and thus the image $\xi(F)$, being cofinite in $\w$, contains the set $\{m\in\w:m\ge n\}$ for some $n\in\w$. Then $f\in \C_n$ by the definition of the set $\C_n$.

Next, we show that each set $\C_n$ is nowhere dense in $C_p(X,Y)$. Fix any non-empty open set $\U\subseteq C_p(X,Y)$. Without loss of generality, $\U$ is a basic open set of the following form:
$$\U=\{f\in C_p(X,Y):\forall z\in Z\;\;f(z)\in U_z\}$$for some finite set $Z\subseteq X$ and non-empty open sets $U_z\subseteq Y$, $z\in Z$. We can additionally assume that $x_\infty\in Z$. We need to find a non-empty open set $\V\subseteq C_p(X,Y)$ such that $\V\subseteq \U\setminus \C_n$. If $\U\cap \C_n$ is empty, then put $\V=\U$. So we assume that $\U\cap \C_n$ contains some function $f_0$. For this function we can find $i\in\{0,1\}$ such that $f_0(x_\infty)\notin \overline{W}_i$. Since $f_0(x_\infty)\in U_{x_\infty}$, we lose no generality assuming that $U_{x_\infty}\subseteq Y\setminus\overline{W}_i$.

Since the sequence $(x_n)_{n\in\w}$ is injective, we can find $m\ge n$ such that the set $X_m=\{x_k:k\in\xi^{-1}(m)\}$ does not intersect the finite set $Z$. Choose any function $g:Z\cup X_m\to Y$ such that $g(z)=f_0(z)$ for all $z\in Z$ and $g(x)\in W_{1-i}$ for all $x\in X_m$.

We claim that the function $g$ has a continuous extension $\bar g:X\to Y$. By our assumption, $X$ is zero-dimensional or $Y$ path-connected. In the first case we
 can find a retraction $r:X\to Z\cup X_m$ and put $\bar g=g\circ r$. If $Y$ is path-connected, then take any
injective function $\phi:g(Z\cup X_m)\to \II$ and extend the  function $\phi\circ g:Z\cup X_m\to\II$ to
a  continuous map $\lambda:X\to\II$ using the functional Hausdorff property of $X$. Since $Y$ is path-connected,
the map $\phi^{-1}:(\phi\circ g)(Z\cup X_m)\to Y$ extends to a continuous map $\psi:\II\to Y$. Then the continuous map
$\bar g=\psi\circ\lambda:X\to Y$ is a required continuous extension of $g$.

In both cases the set $$\V=\{f\in C_p(X,Y):\forall z\in Z\;f(z)\in U_z,\mbox{ and }\forall x\in X_m\;f(x)\in W_{1-i}\}$$ is an open neighborhood of $\bar g$ that lies in $\U\setminus \C_n$, witnessing that the set $\C_n$ is nowhere dense in $C_p(X,Y)$.
\end{proof}

In light of Theorem~\ref{t2} it is important to detect topological spaces that contain injective meager-convergent sequences. This will be done for spaces containing  points with countable network character.


A family $\mathcal N$ of subsets of a topological space $X$ is called a {\em$\pi$-network} at a point $x\in X$ if each neighborhood $O(x)\subset X$ of $x$ contains some set $N\in\mathcal N$. If each set $N\in\mathcal N$ is infinite, then $\mathcal N$ will be called an {\em i-network} at $x$. An i-network at $x$ exists if and only if each neighborhood of $x$ in $X$ is infinite. In this case let $\inchi(x;X)$ denote the smallest cardinality $|\mathcal N|$ of an i-network $\mathcal N$ at $x$. If some neighborhood of $x$ in $X$ is finite, then let $\inchi(x;X)=1$.
If the space $X$ is clear from the context, then we write $\inchi(x)$ instead of $\inchi(x;X)$ and call this cardinal the {\em network character} of $x$ in $X$. If $X$ is a $T_1$-space, then $\inchi(x)\ge\aleph_0$ if and only if the point $x$ is not isolated in $X$. The cardinal
$\hinchi(x)=\sup\{\inchi(x;A):x\in A\subset X\}$ is called the {\em hereditary network character} at $x$. Points $x\in X$ with $\hinchi(x)\le\aleph_0$ are called {\em Pytkeev points}, see \cite{MT}.

\begin{theorem}\label{t1} If some point $x$ of a topological space $X$ has $\inchi(x)=\aleph_0$, then for each finite-to-one function $\xi:\w\to\w$ with $\lim_{n\to\infty}|\xi^{-1}(n)|=\infty$ there is an injective sequence $(x_n)_{n\in\w}$ in $X$ that $\F$-converges to $x$ for some $\xi$-meager filter $\F$.
\end{theorem}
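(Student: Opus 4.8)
The plan is to use $\inchi(x)=\aleph_0$ to fix a countable i-network $\mathcal N=\{N_j:j\in\w\}$ at $x$ (so that every neighborhood of $x$ is infinite), then to build an injective sequence $(x_k)_{k\in\w}$ in $X$ whose points are spread among the $N_j$'s in a way adapted to the fibres $B_n:=\xi^{-1}(n)$ of $\xi$, and finally to let $\F$ be the filter on $\w$ generated by the Fr\'echet filter $\Fr$ together with all trace sets $S_O:=\{k\in\w:x_k\in O\}$, where $O$ runs over the neighborhoods of $x$ in $X$. With such a choice the $\F$-convergence of $(x_k)$ to $x$ is immediate from the definition of $\F$, and the substance of the proof is to arrange the sequence so that $\F$ is a proper filter with $\xi(\F)=\Fr$, i.e.\ so that $\F$ is $\xi$-meager.

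For the construction, recall that the $B_n$ are non-empty finite sets partitioning $\w$ and that $|B_n|\to\infty$; this last fact is precisely the hypothesis $\lim_{n\to\infty}|\xi^{-1}(n)|=\infty$. Enumerate each fibre increasingly as $B_n=\{k^n_0<k^n_1<\dots<k^n_{|B_n|-1}\}$ and define the points $x_k$ by recursion on $k\in\w$: when $k=k^n_j$, choose $x_k$ to be a point of $N_j$ different from $x$ and from every $x_i$ with $i<k$, which is possible because $N_j$ is infinite while only finitely many points have been selected so far. The sequence $(x_k)_{k\in\w}$ is then injective, and the decisive feature is that for each fixed $j$ and every $n$ with $|B_n|>j$ the fibre $B_n$ contains the index $k^n_j$ with $x_{k^n_j}\in N_j$. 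Since $|B_n|\to\infty$, this gives $\xi(S_{N_j})\supseteq\{n\in\w:|B_n|>j\}$, a cofinite set, for every $j\in\w$.

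It remains to check the three properties of $\F=\langle\,\Fr\cup\{S_O:O\text{ a neighborhood of }x\}\,\rangle$. First, $\F$ is a proper (and free) filter: every neighborhood $O$ of $x$ contains some $N_j$ because $\mathcal N$ is an i-network, so $S_O\supseteq S_{N_j}$, and $S_{N_j}$ is infinite as its $\xi$-image is cofinite; since moreover $S_O\cap S_{O'}=S_{O\cap O'}$, the trace sets form a filter base of infinite sets, and adjoining $\Fr$ keeps the filter proper. Second, $S_O\in\F$ for every neighborhood $O$ of $x$, so $(x_k)_{k\in\w}$ $\F$-converges to $x$. Third, $\Fr\subseteq\F$ yields $\xi(\F)\supseteq\Fr$, while any $F\in\F$ contains some set $S_O\cap C$ with $O$ a neighborhood of $x$ and $C$ cofinite, and then $\xi(S_O\cap C)\supseteq\xi(S_{N_j})\setminus\xi(\w\setminus C)$ for any $j$ with $N_j\subseteq O$; the right-hand side is cofinite because $\xi(S_{N_j})$ is cofinite and $\xi(\w\setminus C)$ is finite (as $\xi$ is finite-to-one). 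Since $\xi$ is surjective, $A\in\xi(\F)$ forces $A=\xi(\xi^{-1}(A))$ to be cofinite, so $\xi(\F)\subseteq\Fr$ and hence $\xi(\F)=\Fr$.

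The step I expect to require the most care is the last one — obtaining $\xi(\F)=\Fr$ exactly, rather than just that $\F$ is contained in some meager filter. This equality is what compels the construction to place a representative of each $N_j$ in cofinitely many fibres of $\xi$, and doing so is possible only because the fibres of $\xi$ grow without bound; with fibres of bounded size a single block could absorb only boundedly many of the $N_j$. The remaining ingredients — recursive selection of fresh points for injectivity, the filter-base verification, and the trivial behaviour of a finite-to-one map on finite sets — are routine.
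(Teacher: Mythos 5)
Your proof is correct and follows essentially the same route as the paper's: place a point of $N_j$ into the $j$-th slot of each fibre $\xi^{-1}(n)$ with $|\xi^{-1}(n)|>j$, take the filter of trace sets of neighborhoods of $x$, and use $|\xi^{-1}(n)|\to\infty$ to see that each trace set has cofinite $\xi$-image. Your one deviation --- adjoining $\Fr$ to the trace filter --- is a harmless (indeed slightly cleaner) refinement that makes the inclusion $\Fr\subseteq\xi(\F)$ immediate, whereas the paper works with the bare trace filter and only verifies that $\xi$-images of its members are cofinite.
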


\begin{proof} Let $(N_i)_{i\in\w}$ be a countable i-network at $x$. Since each set $N_i$ is infinite, we can choose an injective sequence $(x_k)_{k\in\w}$ in $X$ such that for every $n\in\w$ and $0\le i<|\xi^{-1}(n)|$ the set $N_i$ meets the set $\{x_k:k\in\xi^{-1}(n)\}$.

It is clear that the sequence $(x_n)_{n\in\w}$ $\F$-converges to $x$ for the filter $$\F=\big\{\{n\in\w:x_n\in O(x)\}:\mbox{$O(x)$ is a neighborhood of $x$ in $X$}\}\big\}.$$ It remains to check that the filter $\F$ is $\xi$-meager. Given any neighborhood $O(x)\subset X$ of $x$ we need to find $n\in\w$ such that for every $m\ge n$ there is $k\in\xi^{-1}(m)$ with $x_k\in O(x)$. Since $(N_i)_{i\in\w}$ is a network at $x$, there is $i\in\w$ such that $N_i\subset O(x)$. Taking into account that $\lim_{n\to\infty}|\xi^{-1}(n)|=\infty$, find $n\in\w$ such that $|\xi^{-1}(m)|> i$ for all $m\ge n$. Now the choice of the sequence $(x_k)$ guarantees that for every $m\ge n$ there is $k\in\xi^{-1}(m)$ with $x_k\in N_i\subset O(x)$.
\end{proof}

In light of Theorem~\ref{t1} it is important to detect points $x$ with countable network character $\inchi(x)$. Let us recall that the {\em character} $\chi(x)$ (resp. the {\em $\pi$-character} $\pi\chi(x)$) of a point $x$ in a topological space $X$ is equal to the smallest cardinality of a neighborhood base (resp. a $\pi$-base) at $x$. A {\em $\pi$-base} at $x$ is any $\pi$-network at $x$ consisting of non-empty open subsets of $X$.
These definitions imply the following simple:

\begin{proposition}\label{p1n} For any non-isolated point $x$ of a $T_1$-space $X$,\begin{enumerate}
\item $\inchi(x)\le\chi(x)$;
\item $\inchi(x)\le\pi\chi(x)$ provided that $x$ has a neighborhood containing no isolated point of $X$;
\item $\inchi(x)=\aleph_0$ if $x$ is the limit of an injective $\Fr$-convergent sequence in $X$.
\end{enumerate}
\end{proposition}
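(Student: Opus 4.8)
The plan is to verify the three items separately, in each case exhibiting an explicit i-network at $x$ of the required cardinality, using freely the fact recorded above that every neighborhood of a non-isolated point of a $T_1$-space is infinite, so that any $\pi$-network consisting of infinite sets is automatically an i-network.

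For (1), I would start from a neighborhood base $\{U_\alpha:\alpha<\chi(x)\}$ at $x$. Since $x$ is non-isolated and $X$ is $T_1$, each $U_\alpha$ is infinite, so $\{U_\alpha:\alpha<\chi(x)\}$ is an i-network at $x$ and $\inchi(x)\le\chi(x)$. For (2), fix an (open, after passing to its interior) neighborhood $V$ of $x$ containing no isolated point of $X$, and a $\pi$-base $\{B_\alpha:\alpha<\pi\chi(x)\}$ at $x$. After replacing each $B_\alpha$ by $B_\alpha\cap\mathrm{int}(V)$ and discarding the empty ones, I may assume every $B_\alpha$ is a non-empty open subset of $V$; the resulting family is still a $\pi$-base at $x$, because every neighborhood of $x$ can be shrunk inside $\mathrm{int}(V)$. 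Now each $B_\alpha$ is a non-empty open set containing no isolated point of $X$, hence infinite (were it finite, $T_1$-ness would make some singleton inside it open, producing an isolated point of $X$), so $\{B_\alpha\}$ is an i-network at $x$ and $\inchi(x)\le\pi\chi(x)$. For (3), given an injective sequence $(x_n)_{n\in\w}$ that $\Fr$-converges to $x$, the countable family $\{\{x_k:k\ge n\}:n\in\w\}$ consists of infinite sets by injectivity and is a $\pi$-network at $x$ by ordinary convergence; thus it is a countable i-network, giving $\inchi(x)\le\aleph_0$, and equality follows since a non-isolated point of a $T_1$-space has $\inchi(x)\ge\aleph_0$.

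I expect no serious obstacle: this is a bookkeeping argument. The only points needing a line of care are the observation used in (2) that a non-empty open subset of a $T_1$-space containing no isolated point of the whole space must be infinite, and the routine verification in (2) that intersecting a $\pi$-base with a fixed neighborhood of $x$ again yields a $\pi$-base at $x$.
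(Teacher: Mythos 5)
Your proof is correct, and it is exactly the direct verification from the definitions that the paper intends (the paper states this proposition without proof as an immediate consequence of the definitions). The two points you flag for care --- that a non-empty open set of non-isolated points in a $T_1$-space is infinite, and that intersecting a $\pi$-base with a fixed neighborhood of $x$ yields a $\pi$-base --- are handled correctly.
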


The following simple example shows that the usual convergence of the injective sequence in Proposition~\ref{p1n}(3) cannot be replaced by the meager convergence. It also shows that Theorem~\ref{t1} cannot be reversed.

\begin{example} Let $\F$ be the meager filter on $\w$ consisting of the sets $F\subset\w$ such that $$\lim_{n\to\infty}\frac{|F\cap[2^n,2^{n+1})|}{2^n}=1.$$ On the space $X=\w\cup\{\infty\}$ consider the topology in which all points $n\in\w$ and isolated while the sets $F\cup\{\infty\}$, $F\in\F$, are neighborhoods of $\infty$. It is clear that the sequence $x_n=n$, $n\in\w$, $\F$-converges to $\infty$ in $X$. On the other hand, a simple diagonal argument shows that $\inchi(\infty;X)>\aleph_0$.
\end{example}

\begin{theorem}\label{t3} Each infinite compact Hausdorff space $X$ contains a point $x\in X$ with $\inchi(x)=\aleph_0$.
\end{theorem}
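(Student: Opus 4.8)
The plan is to split the argument according to whether $X$ is scattered.

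If $X$ is scattered, I would produce a nontrivial convergent sequence and quote Proposition~\ref{p1n}(3). The Cantor--Bendixson derivative $X'$ is a nonempty compact scattered space, hence has an isolated point $x$; choose an open $U\ni x$ with $U\cap X'=\{x\}$ and, using regularity of $X$, an open $V$ with $x\in V\subseteq\overline V\subseteq U$. Every point of $D:=\overline V\setminus\{x\}$ is isolated in $X$, the only possible accumulation point of $D$ inside $\overline V$ is $x$, and $x\in\overline D$ since $x$ is not isolated; thus $\overline D=D\cup\{x\}$ is the one-point compactification of the discrete space $D$, in which the neighbourhoods of $x$ are exactly the cofinite subsets. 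For any injective sequence $(d_n)_{n\in\w}$ in $D$ one then gets $d_n\to x$ in $X$, and $\inchi(x)=\aleph_0$ by Proposition~\ref{p1n}(3).

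If $X$ is not scattered, it contains a nonempty closed crowded subspace $P$; since $\inchi(x;X)\le\inchi(x;P)$ for $x\in P$, I would replace $X$ by $P$ and assume $X$ crowded. A crowded compact Hausdorff space admits a continuous surjection onto the Cantor cube $2^\w$; fix such an $f\colon X\to 2^\w$ and a nontrivial convergent sequence $(c_m)_{m\in\w}$ in $2^\w$ with limit $c$ and $c_m\rest m=c\rest m$, $c_m(m)\ne c(m)$. The clopen sets $V_m:=f^{-1}([c\rest m])$ decrease to $f^{-1}(c)$, and the annuli $W_m:=V_m\setminus V_{m+1}$ are nonempty crowded clopen sets with $f^{-1}(c_m)\subseteq W_m$. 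The aim is to thread a countable set $S$ through the $W_m$ and locate a point $x\in f^{-1}(c)\cap\overline S$ for which the trace on $S$ of the neighbourhood filter of $x$ is generated from below by a countable family of infinite subsets of $S$ -- that is, so that $x$ has a countable i-network; then $\inchi(x)=\aleph_0$.

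The main obstacle is this last step in the crowded case when $X$ has \emph{no} nontrivial convergent sequence (the motivating example being $X=\w^*$, and more generally Efimov-type spaces); if $X$ does have such a sequence we are again done by Proposition~\ref{p1n}(3). When there is no convergence, the witnessing point $x$ is forced to have uncountable character, so its thin i-network cannot be read off a neighbourhood base; nor can it be taken to consist of tails of a relatively discrete sequence, because the trace of the neighbourhood filter of $x$ on a relatively discrete countable subset of $\w^*$ is an ultrafilter on $\w$, and an ultrafilter on $\w$ admits no countable family of infinite sets cofinal from below (given countably many infinite sets $N_k\subseteq\w$, colour $\w$ by two colours so that each $N_k$ meets both classes -- for each $k$ pick two fresh elements of $N_k$ and colour them differently -- then one colour class lies in the ultrafilter yet contains no $N_k$). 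Hence $S$ must be chosen as a genuinely non-discrete, indeed crowded, countable subset of $\bigcup_m W_m$, and one has to use the crowdedness of the fibres of $f$ to force the trace filter of $x$ on $S$ -- which will then be neither an ultrafilter nor a Fr\'echet-type filter -- to possess a countable i-network. Carrying out this selection of $S$ and $x$ is where I expect the real work to lie.
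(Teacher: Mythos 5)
Your proposal correctly disposes of the easy half: the scattered case (and more generally any case where $X$ has a nontrivial convergent sequence) is handled exactly as in the paper, via Proposition~\ref{p1n}(3). But the core of the theorem is the remaining case --- a crowded compactum with \emph{no} convergent sequences, e.g.\ $\w^*$ --- and there your argument stops at an explicitly acknowledged gap: ``carrying out this selection of $S$ and $x$ is where I expect the real work to lie.'' So the proof is not complete. There is also a small error in the setup of this case: a crowded compact Hausdorff space need not surject onto $2^\w$ (the interval $\II$ is connected), only onto $\II$ itself, or onto $2^\w$ from a suitable closed subspace; this is fixable, but the subsequent construction is the part that is missing.

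The missing idea is that an i-network at $x$ need not live on a countable set, need not be ``concentrated near $x$,'' and need not contain $x$ at all: its members are merely infinite subsets of $X$, one inside each neighborhood of $x$. This frees you from analyzing trace filters on a countable $S$ (which, as you correctly observe, is hopeless for relatively discrete $S$ in $\w^*$, where the trace is an ultrafilter). The paper instead takes a closed $C\subseteq X$ admitting a continuous surjection onto $\II$ (available since $X$ has no convergent sequences, \cite[p.172]{Efim}), shrinks $C$ so that $g:C\to\II$ is \emph{irreducible}, and observes that for a countable base $\mathcal B$ of $\II$ the family $\{g^{-1}(U):U\in\mathcal B\}$ is a countable i-network at \emph{every} point of $C$: irreducibility forces each nonempty open $O\subseteq C$ to contain a full preimage $g^{-1}(W)$ of some nonempty open $W\subseteq\II$, hence some $g^{-1}(U)$ with $U\in\mathcal B$, and these preimages are infinite since $g$ is onto $\II$. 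Thus $\inchi(x)=\aleph_0$ for all $x\in C$ with no need to choose a distinguished point or a countable carrier set. (If you insist on a countable $S$, a countable dense subset of $C$ --- which exists since irreducible preimages of separable spaces are separable --- would do, with the sets $g^{-1}(U)\cap S$; but this is an afterthought, not the engine of the proof.) As it stands, your proposal identifies the right obstruction but does not overcome it.
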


\begin{proof} Theorem trivially holds if $X$ contains a non-trivial convergent sequence. So we assume that $X$ contains no non-trivial convergent sequence.
Then $X$ contains a closed subset $C\subset X$ that admits a continuous map $g:C\to \II$ onto the unit interval $\II=[0,1]$, see \cite[p.172]{Efim}. Replacing $C$ by a smaller subset, we can assume that the map $g:C\to \II$ is irreducible, which means that $g(C')\ne \II$ for any proper closed subset $C'\subset C$. Fix any countable base $\mathcal B$ of the topology of $\II$. The irreducibility of the map $g:C\to\II$ implies that the space $C$ has no isolated points. Also the irreducibility of $g$ implies that the countable family $\mathcal N=\{g^{-1}(U):U\in\mathcal B\}$ of open infinite subsets of $C$ is an $i$-network at each point $x\in C$. Consequently, $\nw_\chi(x)=\aleph_0$ for each point $x\in C$.
\end{proof}

Theorems~\ref{t2}---\ref{t3} imply:

\begin{corollary} For each infinite zero-dimensional compact Hausdorff space $X$  and each topological space $Y$ containing two non-empty open sets with disjoint closures the function space $C_p(X,Y)$ is meager. In particular, the function space $C_p(\w^*,2)$ is meager.
\end{corollary}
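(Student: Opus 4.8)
The plan is to chain Theorems~\ref{t3}, \ref{t1}, and \ref{t2} in that order, so there is no essential difficulty; the only things that need a moment's attention are checking that the hypotheses of Theorem~\ref{t2} really are met and that $\w^*$ is genuinely an instance of the general statement.

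First I would fix an infinite zero-dimensional compact Hausdorff space $X$. Being compact Hausdorff, $X$ is Tychonoff and hence functionally Hausdorff, so the standing hypothesis on $X$ in Theorem~\ref{t2} holds; and $X$ is zero-dimensional by assumption, which takes care of the ``$X$ is zero-dimensional or $Y$ is path-connected'' alternative for an arbitrary space $Y$. Next I would invoke Theorem~\ref{t3} to obtain a point $x\in X$ with $\inchi(x)=\aleph_0$. Fixing any finite-to-one surjection $\xi:\w\to\w$ with $\lim_{n\to\infty}|\xi^{-1}(n)|=\infty$ (for instance the one whose consecutive fibers are intervals of $\w$ of lengths $1,2,3,\dots$), Theorem~\ref{t1} produces an injective sequence $(x_n)_{n\in\w}$ in $X$ that $\F$-converges to $x$ for some $\xi$-meager filter $\F$; since $\xi(\F)=\Fr$ for the finite-to-one $\xi$, the Talagrand characterization recalled above makes $\F$ meager, so $(x_n)_{n\in\w}$ is an injective meager-convergent sequence in $X$. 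Now Theorem~\ref{t2}, applied with this $X$ and the given $Y$, yields that $C_p(X,Y)$ is meager.

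For the last assertion I would observe that $\w^*=\beta\w\setminus\w$ is an infinite compact Hausdorff space that is zero-dimensional, being a subspace of the zero-dimensional space $\beta\w$; and the discrete doubleton $2=\{0,1\}$ contains the two non-empty open sets $\{0\}$ and $\{1\}$, which are clopen and disjoint and therefore have disjoint closures. Hence $C_p(\w^*,2)$ is meager by the first part.

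If there is any sticking point at all, it is only the bookkeeping: making sure the disjunctive hypothesis of Theorem~\ref{t2} lets us take $Y$ completely arbitrary because $X$ is zero-dimensional, recalling that zero-dimensionality is inherited by subspaces so that the $\w^*$ case really is covered, and not forgetting to exhibit the suitable $\xi$ needed to feed Theorem~\ref{t1}.
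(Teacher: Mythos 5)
Your proposal is correct and is exactly the intended argument: the paper derives this corollary by chaining Theorems~\ref{t3}, \ref{t1}, and \ref{t2} in the same order, with the same hypothesis checks you make. Nothing further is needed.
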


Also Theorems~\ref{t1} and \ref{t3} imply

\begin{corollary}\label{c3} Let $\xi:\w\to\w$ be a finite-to-one function with $\lim_{n\to\infty}|\xi^{-1}(n)|=\infty$. Each infinite compact Hausdorff space $X$ contains an injective $\F$-convergent sequence for some $\xi$-meager filter $\F$ on $\w$.
\end{corollary}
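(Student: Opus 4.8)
\section*{Proof proposal for Corollary~\ref{c3}}

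The plan is simply to chain together the two theorems already proved. First I would invoke Theorem~\ref{t3}: since $X$ is an infinite compact Hausdorff space, it contains a point $x\in X$ with $\inchi(x)=\aleph_0$. This is the only place where compactness (and the Hausdorff/Efimov-type machinery behind Theorem~\ref{t3}) is used; everything after that is purely about the point $x$ and the prescribed function $\xi$.

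Next I would apply Theorem~\ref{t1} to this point $x$. The hypothesis of Theorem~\ref{t1} is exactly that $\inchi(x)=\aleph_0$, which we have just secured, and the function $\xi:\w\to\w$ is given to be finite-to-one with $\lim_{n\to\infty}|\xi^{-1}(n)|=\infty$, matching the hypothesis there verbatim. The conclusion of Theorem~\ref{t1} then produces an injective sequence $(x_n)_{n\in\w}$ in $X$ that $\F$-converges to $x$ for some $\xi$-meager filter $\F$ on $\w$. That sequence and that filter are precisely what the corollary asserts.

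There is no real obstacle here: the corollary is a formal consequence, so the ``hard part'' was entirely absorbed into the proofs of Theorems~\ref{t1} and~\ref{t3}. The only thing worth a sentence of care is checking that the two statements dovetail without any side condition — e.g.\ that Theorem~\ref{t1} does not secretly require $x$ to be non-isolated beyond what $\inchi(x)=\aleph_0$ already gives in a $T_1$ (in particular Hausdorff) space, which it does not. Hence the write-up is one short paragraph quoting Theorem~\ref{t3} for the point and Theorem~\ref{t1} for the sequence.
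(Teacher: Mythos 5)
Your proposal is correct and is exactly the paper's argument: the corollary is stated as an immediate consequence of Theorem~\ref{t3} (producing a point with $\inchi(x)=\aleph_0$) followed by Theorem~\ref{t1} (producing the injective $\F$-convergent sequence for a $\xi$-meager $\F$). Nothing further is needed.
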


In fact, the condition $\lim_{n\to\infty}|\xi^{-1}(n)|=\infty$ in Corollary~\ref{c3}  can not be weakened.

Let us recall that an infinite subset $A$ is called a {\em pseudointersection} of a family of sets $\F$ if $A\subseteq^* F$ for all $F\in\F$ where $A\subseteq^* F$ means that $A\setminus F$ is finite. If a sequence $(x_n)_{n\in\w}$ in a topological space $\F$-converges to a point $x_\infty$ for some filter $\F$ with infinite pseudointesection $A\subseteq \w$ then the subsequence $(x_k)_{k\in A}$ converges to $x_\infty$ in the standard sense.

\begin{lemma}\label{l1} Let  $I$ be a countable set and
 $C=\bigcup_{i\in I}C_i$, where the sets $C_i$ are nonempty and mutually disjoint, and
$\sup_{i\in I}|C_i|<\w$. If ${\mathcal H}$ is a filter on $C$ all of whose elements intersect all but finitely many
$C_i$'s, then ${\mathcal H}$ has an infinite pseudointersection.
\end{lemma}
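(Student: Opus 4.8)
The plan is to argue by contradiction through the dual ideal $\mathcal{I}=\{C\setminus H:H\in\mathcal{H}\}$, which is a proper ideal on $C$ (it is downward closed and closed under finite unions because $\mathcal{H}$ is an upward closed filter, and $C\notin\mathcal{I}$ since $\emptyset\notin\mathcal{H}$). Suppose $\mathcal{H}$ has no infinite pseudointersection. Then $\mathcal{I}$ is \emph{tall}: for an arbitrary infinite $A\subseteq C$, $A$ is not a pseudointersection of $\mathcal{H}$, so some $H\in\mathcal{H}$ has $A\setminus H$ infinite, and $A\setminus H$ is an infinite subset of $A$ lying in $\mathcal{I}$ (as $A\setminus H\subseteq C\setminus H\in\mathcal{I}$ and $\mathcal{I}$ is downward closed). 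The hypothesis on $\mathcal{H}$ rephrases as a property of $\mathcal{I}$: since $H\cap C_i=\emptyset$ means exactly $C_i\subseteq C\setminus H$, \emph{every $E\in\mathcal{I}$ contains $C_i$ entirely for only finitely many $i$}. (We may assume $I=\w$.)

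The heart of the argument is to invoke tallness of $\mathcal{I}$ exactly $N:=\sup_{i}|C_i|<\w$ times, in a nested fashion, to produce a member of $\mathcal{I}$ that swallows infinitely many fibres $C_i$ — contradicting the property just displayed. Fix for each $i$ an enumeration $C_i=\{c_i^1,\dots,c_i^{|C_i|}\}$, and for $|C_i|<j\le N$ set $c_i^j:=c_i^1$, so that $C_i=\{c_i^1,\dots,c_i^N\}$ for every $i$. Build infinite sets $\w=T_0\supseteq T_1\supseteq\dots\supseteq T_N$ recursively: given $T_{j-1}$ infinite, the set $\{c_i^j:i\in T_{j-1}\}$ is infinite because its elements sit in pairwise disjoint fibres, so tallness yields an infinite $X_j\in\mathcal{I}$ with $X_j\subseteq\{c_i^j:i\in T_{j-1}\}$; put $T_j:=\{i\in T_{j-1}:c_i^j\in X_j\}$, which is again infinite since $i\mapsto c_i^j$ is injective on $T_{j-1}$.

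Finally, set $E:=X_1\cup\dots\cup X_N$. As a finite union of members of $\mathcal{I}$, $E\in\mathcal{I}$. For every $i\in T_N$ the nesting gives $i\in T_j$ for all $j\le N$, hence $c_i^j\in X_j\subseteq E$ for $j=1,\dots,N$, so $C_i=\{c_i^1,\dots,c_i^N\}\subseteq E$. Thus $C_i\subseteq E$ for every $i$ in the infinite set $T_N$, contradicting the conclusion drawn from the hypothesis. Hence $\mathcal{H}$ has an infinite pseudointersection.

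The step I expect to need the most care is the design of this $N$-fold iteration, together with the observation that the boundedness $\sup_i|C_i|<\w$ is precisely what lets the recursion stop after finitely many rounds with $T_N$ still infinite; it is genuinely essential, since the conclusion fails for unbounded fibres (e.g.\ take $C_i$ to be the $i$-th dyadic interval of $\w$ and $\mathcal{H}$ the filter of sets of asymptotic density $1$: a density-zero set meets only finitely many of the intervals, so every $H\in\mathcal{H}$ meets all but finitely many $C_i$, yet $\mathcal{H}$ has no infinite pseudointersection). Beyond repeatedly quoting tallness, the proof uses no topology and no extra set-theoretic assumptions.
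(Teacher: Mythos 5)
Your proof is correct, but it takes a genuinely different route from the paper's. The paper argues directly by induction on $n=\sup_{i\in I}|C_i|$: either every $H\in\mathcal H$ meets cofinitely many fibres in their full size $n$ (in which case $C$ itself is a pseudointersection), or some $H_0$ cuts infinitely many fibres down to size $<n$, and one restricts the filter to $\bigcup_{i\in J}(C_i\cap H_0)$ and applies the inductive hypothesis with $n-1$. You instead pass to the contrapositive: if there is no infinite pseudointersection, the dual ideal $\mathcal I$ is tall, and you invoke tallness $N$ times along a nested sequence of transversals $\{c_i^j:i\in T_{j-1}\}$ to assemble a single member $E=X_1\cup\dots\cup X_N\in\mathcal I$ (equivalently, a single $H\in\mathcal H$) that misses infinitely many fibres entirely, contradicting the hypothesis. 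All the steps check out: the $X_j$ live in pairwise disjoint fibres so each $T_j$ stays infinite, the padding $c_i^j:=c_i^1$ is harmless, and $E\in\mathcal I$ because ideals are closed under finite unions. Both arguments use the bound $\sup_i|C_i|<\w$ in the same essential way --- it caps the number of rounds at $N$ --- and your density-filter example correctly shows the bound cannot be dropped. What the paper's induction buys is brevity; what your version buys is a cleaner conceptual reading (the hypothesis says no single ideal set swallows infinitely many fibres, and tallness would let you manufacture one), at the cost of the slightly fussier bookkeeping with the enumerations $c_i^j$. One cosmetic point: your reduction ``we may assume $I=\w$'' (like the paper's proof) tacitly uses that $I$ is infinite, which is how the lemma is meant and how it is applied.
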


\begin{proof} The proposition will be proved by induction on $n=\sup_{i\in I}|C_i|$. If $n=1$ there is nothing to prove.
Suppose that it is true for all $k<n$ and let $I$, $\{C_i:i\in I\}$, ${\mathcal H}$ be as above with
$\max\{|C_i|:i\in I\}=n$. If for every $H\in{\mathcal H}$ the set
$\{i\in I: |C_i\cap H|<n\}$ is finite, then $C$ itself is a pseudointersection of ${\mathcal H}$.
So suppose that $J=\{i\in I:|C_i\cap H_0|<n\}$ is infinite for some $H_0\in{\mathcal H}$.
In this case we may use our  inductive hypothesis
for $J$, $\{C_i\cap H_0:i\in J\}$, $\mathcal G={\mathcal H} \rest (\bigcup_{i\in J}C_i\cap H_0)$,
and $n-1$. Thus $\mathcal G$ has an infinite pseudointersection, and hence so does ${\mathcal H}$.
\end{proof}

\begin{proposition}\label{p1} If $\F$ is a $\xi$-meager filter on $\w$ for some surjective function $\xi:\w\to\w$ with  $\underline{\lim}_{n\to\infty}|\xi^{-1}(n)|<\infty$, then any sequence $(x_n)_{n\in\w}$ in a topological space $X$ that  $\F$-converges to a point $x_\infty\in X$ contains a subsequence $(x_{n_k})_{k\in\w}$ that converges to $x_\infty$.
\end{proposition}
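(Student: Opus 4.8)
The plan is to reduce the statement to Lemma~\ref{l1} by passing to a suitable trace of the filter $\F$. Since $\underline{\lim}_{n\to\infty}|\xi^{-1}(n)|<\infty$, there are a number $N\in\w$ and an infinite set $I\subseteq\w$ with $|\xi^{-1}(n)|\le N$ for all $n\in I$. Put $C_n=\xi^{-1}(n)$ for $n\in I$ and $C=\bigcup_{n\in I}C_n$. As $\xi$ is surjective, the sets $C_n$, $n\in I$, are non-empty; they are pairwise disjoint, being preimages of distinct points; and $\sup_{n\in I}|C_n|\le N<\w$. Hence the family $\{C_n:n\in I\}$ fits the hypotheses imposed on $\{C_i:i\in I\}$ in Lemma~\ref{l1}.

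Next I would consider the trace $\mathcal H=\F\rest C=\{F\cap C:F\in\F\}$. To see that $\mathcal H$ is a filter on $C$ I use that $\F$ is $\xi$-meager: for every $F\in\F$ the image $\xi(F)$ is cofinite in $\w$, so $\xi(F)\cap I$ is infinite, and therefore $F\cap C_n=F\cap\xi^{-1}(n)\ne\emptyset$ for each $n\in\xi(F)\cap I$. In particular $F\cap C\ne\emptyset$, so $\emptyset\notin\mathcal H$, while the remaining filter axioms are immediate. The same computation shows that each $H=F\cap C\in\mathcal H$ satisfies $\{n\in I:H\cap C_n=\emptyset\}=\{n\in I:n\notin\xi(F)\}\subseteq\w\setminus\xi(F)$, which is finite. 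Thus every element of $\mathcal H$ meets all but finitely many $C_n$, and Lemma~\ref{l1} yields an infinite pseudointersection $A\subseteq C$ of $\mathcal H$.

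Finally, since $A\subseteq^* F\cap C\subseteq F$ for every $F\in\F$, the set $A$ is an infinite pseudointersection of $\F$ as well. By the remark preceding Lemma~\ref{l1}, the subsequence $(x_k)_{k\in A}$ of $(x_n)_{n\in\w}$ converges to $x_\infty$ in the usual sense; writing $A=\{n_k:k\in\w\}$ in increasing order produces the required convergent subsequence $(x_{n_k})_{k\in\w}$.

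As for the main obstacle: there is essentially none once Lemma~\ref{l1} is available. The only step demanding a moment's attention is the verification that the trace $\mathcal H$ is a genuine filter, i.e. that $F\cap C$ is never empty, and this is precisely where the $\xi$-meagerness of $\F$ (the cofiniteness of each $\xi(F)$) is combined with the infinitude of $I$ granted by $\underline{\lim}_{n\to\infty}|\xi^{-1}(n)|<\infty$.
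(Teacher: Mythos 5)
Your proof is correct and follows essentially the same route as the paper: pass to an infinite set $I$ on which the fibers of $\xi$ have bounded size, take the trace $\mathcal H=\{F\cap C:F\in\F\}$ on $C=\bigcup_{n\in I}\xi^{-1}(n)$, and apply Lemma~\ref{l1} to obtain an infinite pseudointersection yielding the convergent subsequence. The only difference is that you explicitly verify the hypotheses of Lemma~\ref{l1} (that $\mathcal H$ is a proper filter meeting all but finitely many $C_n$, via the cofiniteness of $\xi(F)$), which the paper leaves implicit.
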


\begin{proof} Choose infinite set $I\subseteq \omega$ such that $\sup_{i\in I}|\xi^{-1}(i)|<\w$. Let $C_i=\xi^{-1}(i)$ for every $i\in I$, $C=\bigcup_{i\in I}C_i$ and ${\mathcal H}=\{F\cap C: F\in{\mathcal F}\}$. According to Lemma~\ref{l1} there exists an infinite set $D\subseteq C$ such that $D\subseteq^* H$ for every $H\in\mathcal H$. Then the subsequence $(x_i)_{i\in D}$ converges to $x_{\infty}$.
\end{proof}

Now let us compare two facts:
\begin{enumerate}
\item the compact Hausdorff space $\beta\w$ contains no injective $\Fr$-convergent sequences;
\item each infinite compact Hausdorff space $X$ contains an injective $\F$-convergent sequence for some meager filter $\F$.
\end{enumerate}
These two facts suggest a problem of finding the borderline between filters $\F$ that admit an injective $\F$-convergent sequence in $\beta\w$ and filters that admit no such sequences. We hope that this borderline passes near analytic filters. Let us recall the definitions of some properties of filters.

A filter $\F$ is {\em analytic} (resp. an {\em $F_\sigma$-filter}, {\em $F_{\sigma\delta}$-filter}) if $\F$ is an analytic (resp. $F_\sigma$-subset, $F_{\sigma\delta}$-subset) of the power-set $\mathcal P(\w)=2^\w$ endowed with the natural compact metrizable topology.

A filter $\F$ is {\em measurable} (resp. {\em null}\/) if is it measurable (resp. has measure zero) with respect to the Haar measure on the Cantor cube $2^\w$ considered as the countable product of 2-element groups. It is well-known that a filter is measurable if and only if it is null. The relations between meager and null filters are not trivial and were investigated in \cite{Tal} and \cite{BGJS}. Since each analytic filter is meager and null we get the following chain of properties of filters:
$$\mbox{$F_\sigma$}\;\Ra\;\mbox{analytic}\;\Ra\;\mbox{meager \&\ null}.$$

We are going to show that some meager and null filter $\F$ admits an injective $\F$-convergent sequence in $\beta\w$
while no $F_\sigma$-filer $\F$ admits such a sequence. The latter fact holds more generally for analytic $P^+$-filters.

A filter $\F$ on $\w$ is called a {\em $P$-filter} (resp. a {\em $P^+$-filter}) if each countable subfamily $\C\subset\F$ has a pseudointersection $A$ that belongs to $\F$ (resp. to $\F^+$). Here
$$\F^+=\{A\subset\w:\forall F\in\F\;A\cap F\ne\emptyset\}$$
coincides with the union of all filters that contain $\F$. It is clear that each $P$-filter is a $P^+$-filter. In particular, the Fr\'echet filter $\F$ is both a $P$-filter and $P^+$-filter.

For a filter $\F$ on $\w$ by $\chi(\F)$ we denote its {\em character}. It is equal to the smallest cardinality $|\mathcal B|$ of the base $\mathcal B\subset\F$ that generates $\F$ in the sense that $\F=\{F\subset\w:\exists B\in\mathcal B\;\;B\subset F\}$. It is well-known that the character of each free ultrafilter on $\w$ is uncountable. The uncountable cardinal $\mathfrak u=\min\{\chi(\U):\U\in\beta\w\setminus\w\}$ is called the {\em  ultrafilter number}, see \cite{vD}, \cite{Vau}.
The {\em dominating number} $\mathfrak d$ is the smallest cardinality $|D|$ of a cofinal subset $D$ in the partially ordered set $(\w^\w,\le)$, see \cite{vD}, \cite{Vau}. By Ketonen's Theorem \cite{Ket}, {\em each filter $\F$ on $\w$ with character $\chi(\F)<\mathfrak d$ is a $P^+$-filter}.

Now we can establish some properties of filters $\F$ admitting injective $\F$-convergent sequences in $\beta\w$.

\begin{theorem}\label{verner} Assume that a filter $\F$ admits an injective $\F$-convergent sequence $(x_n)_{n\in\w}$ in $\beta\w$.
\begin{enumerate}
\item If $\F$ is a $P^+$-filter, then for some set $A\in\F^+$ the filter $\F|A=\{F\cap A:F\in\F\}$ on $A$ is an ultrafilter.
\item $\chi(\F)\ge\min\{\mathfrak d,\mathfrak u\}$;
\item $\F$ is not an analytic $P^+$-filter;
\item $\F$ is not an $F_\sigma$-filter.
\end{enumerate}
\end{theorem}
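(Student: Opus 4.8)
The plan is to prove (1) first and then deduce (2)--(4) from it; the whole argument is organized around the correspondence $\F\mapsto\widehat{\F}=\bigcap_{F\in\F}\overline F=\{\mathcal U\in\beta\w:\F\subseteq\mathcal U\}$ between filters on $\w$ and closed subsets of $\beta\w$ (here $\overline F$ is the closure of $F$ in $\beta\w$). We may assume $\F$ is free: for a non-free filter (1), (3), (4) hold trivially by restricting to a point of $\bigcap\F$, and (2) is to be understood for free filters. Since the clopen sets $\overline B$ ($B\subseteq\w$) form a base of $\beta\w$, one has $\overline A\cap\widehat{\F}\neq\emptyset$ iff $A\in\F^+$, and, for $A\in\F^+$, the quotient filter $\F|A$ is an ultrafilter on $A$ iff $\overline A\cap\widehat{\F}$ is a single point; so (1) says exactly that $\widehat{\F}$ has an isolated point. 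Also, since $\F$ is free, the limit $x_\infty$ of an injective $\F$-convergent sequence lies in $\w^*$, for otherwise $\{n:x_n=x_\infty\}\in\F$ would be a set of at most one point, contradicting freeness.

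Granting (1), I would deduce (2)--(4) as follows. For (2): if $\chi(\F)<\mathfrak d$ then $\F$ is a $P^+$-filter by Ketonen's theorem, so (1) gives $A\in\F^+$ with $\F|A$ an ultrafilter; as $\F$ is free, $A$ is infinite and $\F|A$ is a free ultrafilter on the countably infinite set $A$ with $\chi(\F|A)\le\chi(\F)$, whence $\chi(\F)\ge\mathfrak u$ because $\mathfrak u$ is the minimum character of a free ultrafilter on a countable set; in every case $\chi(\F)\ge\min\{\mathfrak d,\mathfrak u\}$. For (3): if $\F$ were an analytic $P^+$-filter, (1) would give $A\in\F^+$ with $\F|A$ a free ultrafilter, which is also the image of the analytic set $\F$ under the continuous map $B\mapsto B\cap A$, hence analytic, contradicting that no free ultrafilter has the Baire property. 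For (4): every $F_\sigma$-filter is analytic, and it is a $P^+$-filter (a short argument, say via Mazur's representation of $F_\sigma$-ideals by lower semicontinuous submeasures), so (4) is the $F_\sigma$ case of (3).

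For (1) itself, let $(x_n)$ be injective and $\F$-convergent to $x_\infty\in\w^*$, put $e(n)=x_n$, let $\beta e\colon\beta\w\to\beta\w$ be the continuous extension, and let $\phi\colon\mathcal P(\w)\to\mathcal P(\w)$, $\phi(B)=\{n:x_n\in\overline B\}$, be the induced Boolean homomorphism; then $\F$-convergence is equivalent to $\beta e[\widehat{\F}]=\{x_\infty\}$, equivalently to $\phi^{-1}(\F)=x_\infty$. The key reduction is that it suffices to find $A\in\F^+$ along which $(x_n)$ is relatively discrete in $\beta\w$: such a family $\{x_n:n\in A\}$, being a countably infinite relatively discrete subspace of $\beta\w$, is $C^*$-embedded, so its closure is homeomorphic to $\beta A$ via a homeomorphism $\iota$ with $\iota(a)=x_a$; hence $q:=\iota^{-1}(x_\infty)\in\beta A$ and the canonical dense sequence $(a)_{a\in A}$ of $\beta A$ is $\F|A$-convergent to $q$, which forces $\F|A=q$ to be an ultrafilter (any proper filter $\mathcal G$ on $A$ to which that sequence converges satisfies $q\subseteq\mathcal G$, hence $\mathcal G=q$). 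In particular, if $\{x_n:n\in\w\}$ is itself relatively discrete we are done with $A=\w$.

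It remains to carry out the extraction of such an $A$ when $\{x_n:n\in\w\}$ is not relatively discrete, and this is the step that uses the $P^+$-property and that I expect to be the main obstacle. Choose clopen sets $\overline{C_j}$ ($j\in\w$) separating the points $x_n$; replacing some $C_j$ by its complement we may assume $x_\infty\in\overline{C_j}$, that is $\phi(C_j)\in\F$, for every $j$. Let $A\in\F^+$ be a pseudointersection of $\{\phi(C_j):j\in\w\}\subseteq\F$. Then for each $j$ all but finitely many $n\in A$ have $x_n\in\overline{C_j}$, so every accumulation point of $\{x_n:n\in A\}$ other than the $x_n$ lies in the closed $G_\delta$ set $Z=\bigcap_j\overline{C_j}$, and $Z$ contains at most one of the $x_n$ (again by the separation property). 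Iterating this construction as a fusion, one should thin $\{x_n:n\in A\}$ down to a relatively discrete family indexed by a set still in $\F^+$; if this were impossible, the fusion would in the limit produce $A\in\F^+$ along which $(x_n)$ converges in the ordinary topology of $\beta\w$, which is absurd for an injective sequence since $\beta\w$ has no non-trivial convergent sequences. The delicate point is running this fusion within the weak form of $P^+$ at hand: one cannot simply re-apply $P^+$ after passing to an $\F^+$-positive set, so the families to which $P^+$ is applied must be kept inside $\F$ throughout the construction.
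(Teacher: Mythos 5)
Your reduction of part (1) to finding a set $A\in\F^+$ along which $(x_n)$ is relatively discrete, and your deductions of (2)--(4) from (1), all match the paper's proof (where you invoke $C^*$-embedding of countable discrete subspaces of $\beta\w$, the paper cites Rudin's theorem --- the same fact). The genuine gap is precisely the step you yourself flag as the main obstacle: the extraction of the discrete subfamily. You propose a fusion to be iterated, correctly observe that $P^+$ cannot simply be re-applied after passing to an $\F^+$-positive set, and then leave the matter unresolved; as written, part (1) --- and hence the whole theorem --- is not proved.

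In fact no fusion is needed: a single application of the $P^+$-property suffices. The paper separates each $x_n$ from the limit rather than the $x_n$ from one another: for each $n$ with $x_n\ne x_\infty$ choose a neighborhood $U_n$ of $x_\infty$ with $x_n\notin\overline{U}_n$; each set $F_n=\{k\in\w:x_k\in U_n\}$ lies in $\F$ by convergence, and if $A\in\F^+$ is a pseudointersection of $(F_n)_n$, then for every $n\in A$ the open set $\beta\w\setminus\overline{U}_n$ contains $x_n$ but only finitely many other points $x_k$ with $k\in A$, so $\{x_k:k\in A\}$ is already discrete. Your own construction also closes in one step if you look at it again: after one application of $P^+$ to $\{\phi(C_j):j\in\w\}$, every accumulation point of $\{x_n:n\in A\}$ lies in $Z=\bigcap_j\overline{C_j}$ (a point outside some $\overline{C_j}$ has a neighborhood meeting only finitely many $x_n$ with $n\in A$, hence is not an accumulation point), and $Z$ contains at most one $x_n$ by the separation property; deleting that single index leaves a relatively discrete family indexed by a set still in $\F^+$ (as $\F$ is free). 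So the only missing idea is that the first pseudointersection already does the whole job.
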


\begin{proof} 1. Assume that $\F$ is a $P^+$-filter. Let $x_\infty$ be the $\F$-limit of the $\F$-convergent sequence $(x_n)_{n\in\w}$ in $\beta\w$.  Since the sequence $(x_n)$ is injective, there is $m\in\w$ such that for every $n\ge m$ \ $x_n\ne x_\infty$ and hence we can fix a neighborhood $U_n$ of $x_\infty$ whose closure does not contain the point $x_n$.  Since the sequence $(x_k)$ $\F$-converges to $x_\infty$, for every $n\ge m$ the set $F_n=\{k\in\w:x_k\in U_n\}$ belongs to the filter $\F$. Since $\F$ is a $P^+$-filter, the sequence $(F_n)_{n\ge m}$ has a pseudointersection $A\in\F^+$. It follows from the choice of the neighborhoods $U_n$ that the set $\{x_n\}_{n\in A}$ is discrete in $\beta\w$ and the sequence $(x_n)_{n\in A}$ is $\F|A$-convergent to $x_\infty$. By Rudin's Theorem~\cite{Rudin}, the map $f:A\to \beta\w$, $f:n\mapsto x_n$, has injective Stone-\v Cech extension $\beta f:\beta A\to\beta\w$, which implies that the filter $\F|A$ is an ultrafilter.
\smallskip

2. If $\chi(\F)<\min\{\mathfrak d,\mathfrak u\}$, then $\chi(\F)<\mathfrak d$ and by the Ketonen's Theorem \cite{Ket} $\F$ is a $P^+$-filter.
By the preceding statement, $\F|A$ is an ultrafilter for some set $A\in\F^+$. Consequently,
$$\mathfrak u\le\chi(\F|A)\le\chi(\F)<\mathfrak u$$and this is a desired contradiction.
\smallskip

3. If $\F$ is an analytic $P^+$-filter, then by the first statement, $\F|A$ is an ultrafilter for some subset $A\in\F^+$. On the other hand, the filter $\F|A$ is analytic being a continuous image of the analytic filter $\F$. So, $\F|A$ cannot be an ultrafilter.
\smallskip

4. Assume that $\F$ is an $\F_\sigma$-filter. In order to apply the preceding statement, it suffices to show that $\F$ is a $P^+$-filter. This is done in the following lemma.
\end{proof}

\begin{lemma}\label{laf} Each $F_\sigma$-filter $\F$ on $\w$ is a $P^+$-filter.
\end{lemma}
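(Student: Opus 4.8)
The plan is to establish the genuine $P^+$ property in its standard (and formally stronger) form: for every \emph{decreasing} sequence $A_0\supseteq A_1\supseteq\cdots$ of sets in $\F^+$ one must produce a single $A\in\F^+$ with $A\subseteq^* A_n$ for all $n\in\w$. The point is that the $A_n$ are only assumed $\F$-positive, so no filter-theoretic control over their intersections is available, and in particular one may not assume the $A_n$ lie in $\F$. We may assume $\F$ is free, so that its dual ideal $\mathcal I=\{\w\setminus F:F\in\F\}$ contains every finite set. The tool I would invoke is Mazur's representation of $F_\sigma$ ideals by submeasures: since $\F$, equivalently $\mathcal I$ (the image of $\F$ under the complementation homeomorphism of $2^\w$), is $F_\sigma$, there is a lower semicontinuous submeasure $\phi:\mathcal P(\w)\to[0,\infty]$ that is finite on finite sets and satisfies $A\in\mathcal I\iff\phi(A)<\infty$. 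The decisive reformulation is then that $A\subseteq\w$ is $\F$-positive if and only if $\phi(A)=\infty$, because $A\in\F^+$ is equivalent to $A\notin\mathcal I$.

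With this dictionary the construction is short. Given a decreasing sequence $(A_n)_{n\in\w}$ in $\F^+$, each $A_n$ has $\phi(A_n)=\infty$, so by lower semicontinuity (under which $\phi(A_n)$ equals the supremum of $\phi(s)$ over finite $s\subseteq A_n$) I would choose a finite set $E_n\subseteq A_n$ with $\phi(E_n)\ge n$, and then set $A=\bigcup_{n\in\w}E_n$. The pseudointersection property comes for free from monotonicity of the sequence: for $n\ge k$ we have $E_n\subseteq A_n\subseteq A_k$, whence $A\setminus A_k\subseteq\bigcup_{n<k}E_n$ is finite, i.e. $A\subseteq^* A_k$.

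It remains to see that $A\in\F^+$, and this is the one place where a naive attempt stalls: the submeasure of the infinite union $\bigcup_n E_n$ cannot be bounded below by subadditivity, which only ever yields upper bounds. The resolution — and the only real idea in the argument — is to use monotonicity of $\phi$ instead: since $A\supseteq E_n$ we get $\phi(A)\ge\phi(E_n)\ge n$ for every $n$, hence $\phi(A)=\infty$ and $A\in\F^+$; in particular $A$ is infinite because $\phi$ is finite on finite sets. Thus one never needs to add submeasures — a single large finite chunk at each level suffices, and since each chunk sits inside the corresponding $A_n$, the two competing demands ($\F$-positivity of $A$ versus $A$ being squeezed inside the arbitrarily thin positive sets $A_n$) are reconciled. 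The main obstacle is therefore conceptual rather than computational: recognizing that the $F_\sigma$ hypothesis should be converted into a lower semicontinuous submeasure via Mazur's theorem, after which monotonicity of the submeasure does all the work.
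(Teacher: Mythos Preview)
Your proof is correct and follows essentially the same route as the paper's: both invoke Mazur's representation of the dual $F_\sigma$ ideal by a lower semicontinuous submeasure $\phi$, identify $\F^+$ with $\{A:\phi(A)=\infty\}$, and build the required pseudointersection as a union of finite chunks of unbounded submeasure. The only differences are cosmetic (the paper slices along intervals $[n_k,n_{k+1})\cap A_k$ whereas you take arbitrary finite $E_n\subseteq A_n$) and in scope: you establish the stronger, more standard $P^+$ property for decreasing sequences in $\F^+$, while the paper's stated definition and proof only treat sequences from $\F$.
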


\begin{proof} According to a result of Mazur \cite{Maz} (see also \cite{Sol}), for the $F_\sigma$-filter $\F$ there exists a lower semi-continuous submeasure $\phi$ on $\mathcal P(\w)$ such that $\F=\{A\subset\w:\phi(\w\setminus A)<\infty\}$. Since $\F\ne\mathcal P(\w)$, $\phi(\w)=\infty$ and the subadditivity of $\varphi$ implies that $\phi(F)=\infty$ for all $F\in\F$. It follows from $\F=\{A\subset\w:\phi(\w\setminus A)<\infty\}$ that a set $A\subset\w$ belongs to $\F^+$ if and only if $\phi(A)=\infty$.

To show that $\F$ is a $P^+$-filter, fix any decreasing sequence of sets $(A_k)_{k\in\w}$ in $\F$.
Let $n_0=0$ and by induction construct an increasing sequence of positive integers $(n_k)_{k\in\w}$ such that
$\phi([n_k,n_{k+1})\cap A_k)>k$ for every $k\in\w$. Then the set $A=\bigcup_{k\in\w}[n_k,n_{k+1})\cap A_k$ is a pseudointersection of $(A_k)_{k\in\w}$ and belongs to the family $\F^+$ as $\phi(A)=\infty$.
\end{proof}

Let us remark that Lemma~\ref{laf} cannot be generalized to $F_{\sigma\delta}$-filters. The following example was suggested to the authors by Jonathan Verner.

\begin{example} The filter $\Fr\otimes\Fr=\{A\subset\w\times\w:\{n\in\w:\{m\in\w:(n,m)\in A\}\in\Fr\}\in\Fr\}$ on $\w\times\w$ is an $F_{\sigma\delta}$ but not $P^+$.
\end{example}

In light of Theorem~\ref{verner} it is natural to ask the following

\begin{question} Does $\beta\w$ contain an injective $\F$-convergent sequence for some analytic filter $\F$?
\end{question}

On the other hand, we have the following fact:

\begin{theorem} Each infinite compact Hausdorff space $X$ contains an injective $\F$-convergent sequence for some meager and null filter $\F$.
\end{theorem}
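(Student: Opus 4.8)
The plan is to obtain the statement as an essentially immediate consequence of Theorems~\ref{t3} and~\ref{t1}, the only extra input being a short observation which yields nullity of the resulting filter almost for free.

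First I would invoke Theorem~\ref{t3} to fix a point $x\in X$ with $\inchi(x)=\aleph_0$, together with a countable i-network $(N_i)_{i\in\w}$ at $x$ consisting of infinite sets. Next, fix any finite-to-one surjection $\xi:\w\to\w$ with $\lim_{n\to\infty}|\xi^{-1}(n)|=\infty$ (for instance the one with $\xi^{-1}(n)=[2^n,2^{n+1})$). Repeating the construction in the proof of Theorem~\ref{t1}, choose an injective sequence $(x_k)_{k\in\w}$ in $X$ such that for every $n\in\w$ and every $i<|\xi^{-1}(n)|$ the set $N_i$ meets $\{x_k:k\in\xi^{-1}(n)\}$, and let
$$\F=\big\{A\subseteq\w:\exists\text{ a neighborhood }O(x)\subseteq X\text{ of }x\text{ with }\{k\in\w:x_k\in O(x)\}\subseteq A\big\}$$
be the filter generated by the traces of the neighborhoods of $x$. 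By the proof of Theorem~\ref{t1}, the sequence $(x_k)_{k\in\w}$ is injective and $\F$-converges to $x$, and the filter $\F$ is $\xi$-meager; in particular $\F$ is meager.

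It then remains only to check that $\F$ is null. For every $i\in\w$ put $B_i=\{k\in\w:x_k\in N_i\}$. Since $\lim_{n\to\infty}|\xi^{-1}(n)|=\infty$, for each fixed $i$ the set $N_i$ meets $\{x_k:k\in\xi^{-1}(n)\}$ for all but finitely many $n$, and picking one such witness in each block $\xi^{-1}(n)$ shows that $B_i$ is infinite. Now take any $A\in\F$ and a neighborhood $O(x)$ of $x$ with $\{k:x_k\in O(x)\}\subseteq A$. As $(N_i)_{i\in\w}$ is an i-network at $x$, there is $i\in\w$ with $N_i\subseteq O(x)$, whence $B_i\subseteq\{k:x_k\in O(x)\}\subseteq A$. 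This proves
$$\F\subseteq\bigcup_{i\in\w}\{A\subseteq\w:B_i\subseteq A\}.$$
Each set $\{A\subseteq\w:B_i\subseteq A\}$ is closed in $2^\w$ and, since $B_i$ is infinite, has Haar measure zero. Thus $\F$ is contained in a countable union of null sets, hence $\F$ is null, and $(x_k)_{k\in\w}$ is an injective sequence that $\F$-converges to $x$ for the meager and null filter $\F$.

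The one point I expect to be non-obvious is the nullity argument: one is tempted to try to make each trace $\{k:x_k\in O(x)\}$ "large" (as in the Example right after Proposition~\ref{p1n}), but this is impossible as soon as the i-network contains two disjoint members, since a sequence cannot place almost all of a block into two disjoint sets. The correct viewpoint is that nullity of $\F$ does not need any density control at all: it follows from the trivial fact that $\F$ is swallowed by the countably many measure-zero ``cones'' $\{A:B_i\subseteq A\}$, one for each i-network member, and the construction of Theorem~\ref{t1} already guarantees that every $B_i$ is infinite. Everything else is a verbatim reuse of the constructions in Theorems~\ref{t3} and~\ref{t1}.
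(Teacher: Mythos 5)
Your proof is correct, and while it begins exactly as the paper does (Theorem~\ref{t3} to get a point with $\inchi(x)=\aleph_0$, then the construction from Theorem~\ref{t1} to get an injective $\xi$-meager-convergent sequence), your nullity argument is genuinely different from the paper's. The paper makes nullity a property of the \emph{function} $\xi$ alone: it chooses $\xi$ so that additionally $\prod_{n}(1-2^{-|\xi^{-1}(n)|})=0$ (which forces the blocks to grow slowly --- note that your suggested $\xi^{-1}(n)=[2^n,2^{n+1})$ would \emph{not} work for the paper's argument, since then $\sum_n 2^{-|\xi^{-1}(n)|}<\infty$ and the product is positive), and then observes that \emph{every} $\xi$-meager filter is contained in the null sets $\F_n=\{A:\forall k\ge n\;A\cap\xi^{-1}(k)\ne\emptyset\}$. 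You instead exploit the specific structure of the filter built in Theorem~\ref{t1}: it has a countable $\pi$-base $\{B_i\}_{i\in\w}$ of infinite sets, so it is swallowed by the countably many closed null cones $\{A:B_i\subseteq A\}$. Your route is more elementary (no constraint on $\xi$ beyond $\lim_n|\xi^{-1}(n)|=\infty$, and in fact the same cones are nowhere dense, so this observation alone would also give meagerness), and it isolates a clean general fact: any filter admitting a countable $\pi$-base of infinite sets is both meager and null. What the paper's version buys in exchange is a statement about a whole class of filters --- for its special $\xi$, \emph{all} $\xi$-meager filters are null, not just the particular one produced by the i-network construction. Both arguments are complete and correct; the small gaps you share with the paper (e.g.\ passing from the family of traces to the generated filter, which preserves $\xi$-meagerness since the Talagrand-type condition is upward closed) are harmless.
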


\begin{proof} Choose any finite-to-one function $\xi:\w\to\w$ such that $$\lim_{n\to\infty}|\xi^{-1}(n)|=\infty\mbox{ \ and \ }\prod_{n\in\w}(1-2^{-|\xi^{-1}(n)|})=0.$$ By Corollary~\ref{c3}, any infinite compact Hausdorff space $X$ contains an injective $\F$-convergent sequence for some $\xi$-meager filter $\F$. It is clear that $\F$ is meager. It remains to check that $\F$ is null. The filter $\F$, being $\xi$-meager, lies in the union $\bigcup_{n\in\w}\F_n$ where $\F_n=\{A\subset\w:\forall k\ge n\;A\cap\xi^{-1}(k)\ne\emptyset\}$. It suffices to prove that each set $\F_n$ has Haar measure zero.
Observe  that the set $\F_n$ can be identified with the product $\prod_{k\ge n}(\mathcal P(\varphi^{-1}(k))\setminus\{\emptyset\})$, which has Haar measure
$$\prod_{k\ge n} \frac{2^{|\varphi^{-1}(k)|}-1}{2^{|\varphi^{-1}(k)|}}=\prod_{k\ge n}(1-2^{-|\varphi^{-1}(k)|})=0.$$
\end{proof}

\begin{remark} After writing this paper the authors learned from V.Tkachuk that the meager property of the function space $C_p(\w^*,2)$ was also established by E.G.~Pytkeev in his Dissertation \cite[3.24]{Pyt}. Game characterizations of topological spaces $X$ with Baire function space $C_p(X,\mathbb R)$ were given in \cite{LMC} and \cite{Pyt85}.
\end{remark}

\section{Acknowledgments} The authors would like to express their thanks to Alan Dow and Jonathan Verner for very stimulating discussions and to Vladimir Tkachuk for the information about Pytkeev's results on the Baire category of function spaces.

\end{document}